\numberwithin{equation}{section}
\renewcommand {\Re} {{\rm Re}}
\renewcommand {\Im} {{\rm Im}}
\theoremstyle{theorem}
\newtheorem{theorem}{Theorem}[section]
\newtheorem{lemma}[theorem]{Lemma}
\newtheorem{corollary}[theorem]{Corollary}
\theoremstyle{definition}
\newtheorem{example}[theorem]{Example}
\theoremstyle{remark}
\newtheorem{remark}[theorem]{Remark}
\numberwithin{equation}{section}
\begin{document}
\title[Asymptotic dilation of regular homeomorphisms]{Asymptotic dilation of regular homeomorphisms}
\date{\today \hskip 3mm \currenttime \hskip 4mm (\texttt {AsymDil.tex})}
\subjclass[2010]{Primary: 28A75, 30A10; Secondary: 26B15, 30C80} 
\keywords{Length and area, $p$-angular dilatation, regular homeomorphism, nonlinear Beltrami equation}

\author[A. Golberg]{Anatoly Golberg}

\author[R. Salimov]{Ruslan Salimov}

\author[M. Stefanchuk]{Maria Stefanchuk}

\begin{abstract} 
We study the asymptotic behavior of the ratio $|f(z)|/|z|$ as $z\to 0$ for mappings differentiable a.e. in the unit disc with non-degenerated Jacobian. The main tools involve the length-area functionals and angular dilatations depending on some real number $p.$  The results are applied to homeomorphic solutions of a nonlinear Beltrami equation. The estimates are illustrated by examples.
\end{abstract}
\maketitle


\medskip

\bigskip

\section{Introduction}

\subsection{} A very interesting problem of classical Complex Analysis is to extend Schwarz's Lemma to more general classes of functions than analytic ones. A result of such type has been established by Teichm\"uller \cite{Tei38} for quasiconformal mappings. He proved the existence of limit of $|f(z)|/|z|$ at the origin and called it \emph{asymptotic dilation} assuming the finiteness of the integral of a form $\int_{|z|<1} (K_f(z)-1)/|z|^2 dxdy$. Here $K_f(z)$ stands for the real coefficient of quasiconformality of the mapping $f.$ Later the same result has been obtained by a different method (Ahlfors' inequalities) by Wittich \cite{Wit48}. For extensions of such results see also \cite{Bel74}, \cite{Leh60}, \cite{RW65} and others. The most of these results involve the module technique (extremal lengths).

In the paper we apply the length-area method originated in 1920's and isoperimetric inequality; see, e.g. \cite{AB50}, \cite {BGMR13}, \cite{Hay94}, \cite{Suv85}.

Throughout the paper we consider sense-preserving homeomorphisms from the Sobolev class $W^{1,1}$ in the unit disc requiring regularity and possessing Lusin's $(N)$-property (preservation of sets with zero measure). The regularity of a mapping in a domain means its differentiability almost everywhere (a.e.) and nonvanishing Jacobian a.e. Since we study the $W^{1,1}$-homeomorphisms, the differentiability a.e. follows \textit{a priori}; see, e.g. \cite{HK14}. Note that nonvanishing Jacobian is equivalent to Lusin's ($N^{-1}$)-property.

In this paper the asymptotic behavior (asymptotic dilation) of such mappings is studied under assumption that some integral mean of the angular dilatation is finite.

\subsection{} There are various counterparts of directional dilatations in the plane; see, e.g. \cite{AC71}, \cite{GMSV05}, \cite{RSY05}. These dilatations are much flexible than the classical (inner, outer, linear) ones and provide the important tools for investigation of local properties of quasiconformal mappings in the Eucledian and more general metric spaces (cf. \cite{Gol10}, \cite{GS14CSP}, \cite{GG09}).

We use the following definition. Let $D$ be a domain in $\mathbb C$ and $f\colon D \to \mathbb C$ be a regular homeomorphism of Sobolev class $W_{\rm{loc}}^{1, 1}.$ For any $p>1,$ the quantity
\begin{equation}\label{eq2}
D_p(z,0)=D_{p}(re^{i\theta},0)=\frac{|f_{\theta}(re^{i\theta})|^{p}}{r^pJ_f(re^{i\theta})}
\end{equation}
is called $p$-\emph{angular dilatation} of $f$ at a point $z\in D,$ $z\ne 0,$ with respect to the origin. For simplicity of notations, we write $D_p(z)=D_p(z,0).$

\subsection{} Let $B_r=\{z\in\mathbb C: |z|< r\},$ $\gamma_r=\{z\in\mathbb C: |z|=r\},$ and $\mathbb B=B_1.$
Our first result states:

\medskip
\begin{theorem}\label{th1}
Let $f\colon \mathbb B\to \mathbb B$ be a regular homeomorphism of Sobolev class $W^{1,1}_{\rm loc}$ possessing Lusin's $(N)$-property, and  $f(0)=0$. Suppose that there exists $k$ such that for $p>2,$
\begin{equation}\label{eqth1}
\liminf\limits_{r\rightarrow0}\left(\frac{1}{\pi r^2}\iint\limits_{B_{r}}D_{p}^{\frac{1}{p-1}}(z)\,dxdy\right)^{p-1}
\le k<\infty.
\end{equation}
Then
\begin{equation}\label{eqth2}
\liminf\limits_{z\rightarrow0}\frac{|f(z)|}{|z|}\le c_{p}\,k^{\frac{1}{p-2}}<\infty,
\end{equation}
where $c_{p}$ is a positive constant depending only on $p$.
\end{theorem}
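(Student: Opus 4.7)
The plan is to use the classical length-area method to bound $l(r):=\min_{|z|=r}|f(z)|$ from above along a sequence $r_n\to 0$ supplied by the hypothesis. Since $\liminf_{z\to 0}|f(z)|/|z|=\liminf_{r\to 0}l(r)/r$, this will suffice. Write $L(r)$ for the length of the Jordan curve $f(\gamma_r)$, $M(r):=\max_{|z|=r}|f(z)|$, $A(r)$ for the area of $f(B_r)$, and set $J(r)=\int_0^{2\pi}J_f(re^{i\theta})\,d\theta$, $K(r)=\int_0^{2\pi}D_p^{1/(p-1)}(re^{i\theta})\,d\theta$. The hypothesis \eqref{eqth1} then furnishes a subsequence $r_n\to 0$ along which $I(r_n):=\iint_{B_{r_n}}D_p^{1/(p-1)}\,dxdy\le\pi r_n^2 k^{1/(p-1)}+o(r_n^2)$.

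The first step combines H\"older's inequality with the geometry of the image curve. Rearranging \eqref{eq2} gives $|f_\theta|=r(J_fD_p)^{1/p}$, so H\"older with conjugate exponents $p$ and $p/(p-1)$ yields the pointwise length estimate $L(r)\le rJ(r)^{1/p}K(r)^{(p-1)/p}$. Two geometric bounds on $f(\gamma_r)$ (a Jordan curve winding once around $0$) then follow: the winding-number argument gives $l(r)\le L(r)/(2\pi)$, and the diameter inequality $\mathrm{diam}\le L/2$ gives $M(r)\le l(r)+L(r)/2\le cL(r)$ with $c=(1+\pi)/(2\pi)$. Since $f(B_r)\subseteq\overline{B_{M(r)}}$, combining these produces
\[
A(r)\le\pi c^2\,r^2\,J(r)^{2/p}K(r)^{2(p-1)/p}.
\]

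The heart of the argument is to turn this into a differential inequality for $A$. Using $J(r)=A'(r)/r$ (absolute continuity of $A$ and Fubini in polar coordinates, both standard under the $W^{1,1}$/Lusin $(N)$ assumptions) and raising to the power $p/2$ produces $A'(r)/A(r)^{p/2}\ge((\pi c^2)^{p/2}r^{p-1}K(r)^{p-1})^{-1}$. Recognising the left side as $-\tfrac{2}{p-2}\tfrac{d}{dr}[A(r)^{-(p-2)/2}]$ and integrating over $[r_0,r_n]$ yields
\[
A(r_0)^{-(p-2)/2}-A(r_n)^{-(p-2)/2}\ge\frac{p-2}{2(\pi c^2)^{p/2}}\int_{r_0}^{r_n}\frac{dr}{(rK(r))^{p-1}}.
\]
I then drop the positive subtracted term, majorise the left side via $A(r_0)\ge\pi l(r_0)^2$ by $\pi^{-(p-2)/2}l(r_0)^{-(p-2)}$, and apply Jensen's inequality to the convex function $x\mapsto x^{-(p-1)}$ to estimate the remaining integral: $\int_{r_0}^{r_n}(rK)^{-(p-1)}\,dr\ge(r_n-r_0)^p/(I(r_n)-I(r_0))^{p-1}$.

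Setting $r_0=r_n/2$ and inserting the hypothesis bound $I(r_n)\le\pi r_n^2 k^{1/(p-1)}(1+o(1))$, the powers of $r_n$ match up so that the right-hand side carries the factor $r_n^{p-2}k$; this yields $l(r_n/2)^{p-2}\le C_p\,r_n^{p-2}k(1+o(1))$, hence $l(r_n/2)/(r_n/2)\le c_p\,k^{1/(p-2)}(1+o(1))$ along the subsequence, and passing to the liminf gives the conclusion with a positive $c_p$ depending only on $p$. The principal obstacle I foresee is the passage from H\"older's bound on $L$ to a sharp bound on $A$: the crude estimate $A\le\pi M^2\le\pi$ is too lossy and gives a vacuous inequality, and it is only the diameter coupling $M\le cL$ (which feeds $A$ into the same differential inequality as $L$) that generates the correct exponent $1/(p-2)$ in the final conclusion.
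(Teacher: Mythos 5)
Your proposal is correct and follows essentially the same length--area argument as the paper: H\"older's inequality applied to $L(r)=\int_0^{2\pi}|f_\theta|\,d\theta$, the resulting differential inequality for the area $S(r)$ via $S'(r)=\int_0^{2\pi}J_f r\,d\theta$, integration over the dyadic annulus between $r_n/2$ and $r_n$ (the paper's Lemmas~\ref{lem2} and \ref{lem3} restricted to $[r,2r]$), Jensen's inequality plus Fubini to pass to the solid integral mean in \eqref{eqth1}, and the finishing bound $\pi l_f^2(r)\le S(r)$. The only deviation is that you replace the isoperimetric inequality $L^2(r)\ge 4\pi S(r)$ used in Lemma~\ref{lem1} by the cruder winding-number/diameter estimate $S(r)\le\pi M^2(r)\le \pi c^2 L^2(r)$, which is valid and only worsens the constant $c_p$.
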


Some results related to \eqref{eqth2} are given in \cite{GS14GMJ}, \cite{Ikoma65}, \cite{Sal15}.

\section{Auxiliary results}

We present several needed preliminary results. Some of those have an intrinsic interest.

\medskip
\subsection{} For a measurable function $Q\,:\,D\,\to\,[0,+\infty]$ and $p\,>\,1,$ denote
\begin{equation}\label{dqp}
q_{p}(r)=\left(\frac{1}{2\pi
r}\int\limits_{\gamma_{r}}Q^{\frac{1}{p-1}}(z)\,|dz|\right)^{p-1}\,.
\end{equation}
When $Q(z)=D_p(z),$ we write $q_p(r)=d_p(r).$

The following statement provides a differential inequality for the area functional $S(r)=m(f(B_r)),$ where $m$ denotes the two dimensional Lebesgue measure.

\medskip
\begin{lemma}\label{lem1}
Let $f\colon  \mathbb B\to \mathbb
B$ be a regular $W^{1, 1}_{\rm
loc}$-homeomorphism with Lusin's $(N)$-property, and $p > 1.$ Then for a.a. $r\in [0,1],$
\begin{equation}\label{eqlem1}
S'(r)\ge 2\pi^{\frac{2-p}{2}}\, r^{1-p}\, d_{p}^{-1}(r)\,
S^{\frac{p}{2}}(r).
\end{equation}
\end{lemma}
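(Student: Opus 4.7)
The plan is to combine three classical ingredients: the change-of-variables formula for Sobolev homeomorphisms (to express $S(r)$ as an integral of the Jacobian), Hölder's inequality (to relate the boundary length to the Jacobian via the $p$-angular dilatation), and the classical isoperimetric inequality (to pass from boundary length to enclosed area).

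First, I would use the assumption that $f$ is a regular $W^{1,1}_{\rm loc}$-homeomorphism with the Lusin $(N)$-property, so that the change-of-variables formula applies and gives
\begin{equation*}
S(r) = m(f(B_r)) = \iint_{B_r} J_f(z)\,dx\,dy = \int_0^r \rho \int_0^{2\pi} J_f(\rho e^{i\theta})\,d\theta\,d\rho.
\end{equation*}
This makes $S$ locally absolutely continuous, and for a.a.\ $r\in[0,1]$,
\begin{equation*}
S'(r) = r\int_0^{2\pi} J_f(re^{i\theta})\,d\theta.
\end{equation*}
From the definition \eqref{eq2} we have $J_f = r^{-p}|f_\theta|^p/D_p(z)$, so $S'(r) = r^{1-p}\int_0^{2\pi}|f_\theta|^p/D_p\,d\theta$.

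Next, I would estimate this integral from below by Hölder's inequality with the conjugate exponents $p$ and $p/(p-1)$, writing $|f_\theta| = (|f_\theta|^p/D_p)^{1/p}\,D_p^{1/p}$:
\begin{equation*}
\int_0^{2\pi} |f_\theta|\,d\theta \le \left(\int_0^{2\pi} \frac{|f_\theta|^p}{D_p}\,d\theta\right)^{1/p} \left(\int_0^{2\pi} D_p^{\frac{1}{p-1}}\,d\theta\right)^{\frac{p-1}{p}}.
\end{equation*}
The last factor is exactly $(2\pi)^{(p-1)/p} d_p(r)^{1/p}$ once one rewrites the angular integral as the line integral over $\gamma_r$ via $|dz|=r\,d\theta$. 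Rearranging gives the lower bound
\begin{equation*}
\int_0^{2\pi} \frac{|f_\theta|^p}{D_p}\,d\theta \ge \frac{\ell(r)^p}{(2\pi)^{p-1}\, d_p(r)}, \qquad \ell(r):=\int_0^{2\pi}|f_\theta(re^{i\theta})|\,d\theta.
\end{equation*}

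Finally, for a.a.\ $r$ the ACL property of $f$ guarantees that $\ell(r)$ is the length of the Jordan curve $f(\gamma_r)$, which bounds the Jordan domain $f(B_r)$ of area $S(r)$. The classical isoperimetric inequality then yields $\ell(r)^2 \ge 4\pi S(r)$, hence $\ell(r)^p \ge 2^p \pi^{p/2} S(r)^{p/2}$. Substituting back,
\begin{equation*}
S'(r) \ge \frac{r^{1-p}\cdot 2^p \pi^{p/2}}{(2\pi)^{p-1}}\,d_p^{-1}(r)\,S^{p/2}(r) = 2\pi^{\frac{2-p}{2}}\,r^{1-p}\,d_p^{-1}(r)\,S^{p/2}(r),
\end{equation*}
which is \eqref{eqlem1}.

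The main obstacle is not the algebra but the measure-theoretic justification of the three identifications used at "a.a.\ $r$": that $S(r)$ is absolutely continuous (requires the $(N)$-property together with $J_f\in L^1_{\rm loc}$, which in turn follows from regularity); that $S'(r)$ genuinely equals the Jacobian integral over $\gamma_r$ (Lebesgue differentiation applied to the polar-coordinate representation); and that for a.e.\ radius $r$ the circle $\gamma_r$ is an ACL line for $f$ so that $\int_0^{2\pi}|f_\theta|\,d\theta$ coincides with the length of $f(\gamma_r)$, validating the application of the isoperimetric inequality. I would cite the $W^{1,1}$-homeomorphism theory (e.g.\ \cite{HK14}) for these points rather than re-derive them.
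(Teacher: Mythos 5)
Your proposal is correct and follows essentially the same route as the paper: express $S'(r)$ as the angular integral of $J_f$ via the $(N)$-property and Fubini, apply H\"older with exponents $p$ and $p/(p-1)$ to relate $\int_0^{2\pi}|f_\theta|\,d\theta$ to $d_p(r)$ and the Jacobian integral, and finish with the isoperimetric inequality $L^2(r)\ge 4\pi S(r)$. The constants and the measure-theoretic justifications you flag match the paper's argument, so nothing further is needed.
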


\begin{proof} Denote by $L(r)$ the length of curve $f(re^{i\theta}),$ $0\le\theta\le 2\pi.$ For a.a. $r\in [0,1],$
\begin{equation*}
L(r)=\int\limits_{0}^{2\pi}|f_{\theta}(re^{i\theta})|\,d\theta=\int_{0}^{2\pi}
\left[D_p(re^{i\theta})\right]^\frac{1}{p}
\left[J_f(re^{i\theta})\right]^\frac{1}{p}r\,d\theta\,,
\end{equation*}
and by H\"older's inequality,
\begin{equation}\label{eqlem11}
L^p(r)\,\le\,
\left(\int\limits_{0}^{2\pi}\left[D_p(re^{i\theta})\right]^\frac{1}{p-1}r\,d\theta\right)^{p-1}
\int\limits_{0}^{2\pi}J_f(re^{i\theta})r\,d\theta\,.
\end{equation}

Due to Lusin's $(N)$-property and the Fubini theorem,
\begin{equation*}
S(r)=\iint\limits_{B_r}J_{f}(z)\,dxdy=\int\limits_{0}^{r}\int\limits_{0}^{2\pi}J_{f}(te^{i\theta})t\,dtd\theta\,;
\end{equation*}
hence, for a.a. $r\in [0,1],$
\begin{equation*}
S^\prime(r)\,=\,\int\limits_{0}^{2\pi}
J_{f}(re^{i\theta}) r\,d\theta\,.
\end{equation*}
Estimating the last integral by (\ref{eqlem11}) and using (\ref{dqp}) for $Q(z)=D_p(z),$ one obtains
\begin{equation}\label{eqlem12}
S^\prime(r)\,\ge\,\frac{L^p(r)}{(2\pi r)^{p-1}\,d_p(r)}\,.
\end{equation}
Combining (\ref{eqlem12}) with the planar isoperimetric inequality $L^2(r)\ge 4\pi S(r)$ implies the desired relation (\ref{eqlem1}).
\end{proof}

The inequality (\ref{eqlem12}) allows us to derive a counterpart of the classical length-area principle for conformal and quasiconformal mappings; see, e.g. \cite{Suv85}.

Integrating (\ref{eqlem12}), one gets for $0<r_1<r<r_2<1,$
\begin{equation*}
\int\limits_{r_1}^{r_2} \frac{L^p(r)\,dr}{(2\pi r)^{p-1}\,d_p(r)}\,\le\,S(r_2)-S(r_1)\,\le\,S(r_2)\,.
\end{equation*}

\subsection{} The upper bounds for the images of $B_r$ under regular homeomorphisms of Sobolev class $W^{1,1}$ are established in the following statement for any $p>2.$ For the case $p=2,$ we refer to \cite[Proposition~3.7]{BGMR13}.

\medskip
\begin{lemma}\label{lem2}
Let $f\colon  \mathbb B\to \mathbb
B$ be a regular $W^{1, 1}_{\rm
loc}$-homeomorphism with Lusin's $(N)$-property, and $p > 2.$ Then for a.a. $r\in [0,1],$
\begin{equation}\label{eqlem21}
S(r)\,\le\,\pi\,(p-2)^{-\frac{2}{p-2}}\left(\int\limits_r^1\frac{dt}{t^{p-1}\, d_{p}(t)}\right)^{-\frac{2}{p-2}}\,.
\end{equation}
\end{lemma}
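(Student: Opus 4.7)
The plan is to treat Lemma~\ref{lem1} as a separable ODE in $S(r)$ and integrate. Dividing \eqref{eqlem1} by $S^{p/2}(r)$ (which is legitimate since $f$ is a homeomorphism, so $f(B_r)$ is a nonempty open set and $S(r) = m(f(B_r)) > 0$ for every $r > 0$) yields
\begin{equation*}
\frac{S'(r)}{S^{p/2}(r)} \ge 2\pi^{(2-p)/2}\, r^{1-p}\, d_p^{-1}(r).
\end{equation*}
The key observation is that, wherever $S$ is differentiable, the left side equals the derivative of $F(r) := -\tfrac{2}{p-2}\, S^{1-p/2}(r)$; and since $p > 2$ forces $1 - p/2 < 0$ while $S$ is nondecreasing, $F$ is itself nondecreasing on $(0,1]$.

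I would then integrate over $[r, 1]$. For any nondecreasing function $F$ one has the classical inequality $\int_r^1 F'(t)\, dt \le F(1) - F(r)$ coming from Lebesgue's theorem on differentiation of monotone functions; in particular no absolute continuity of $S$ is needed. This gives
\begin{equation*}
\tfrac{2}{p-2}\bigl[S^{1-p/2}(r) - S^{1-p/2}(1)\bigr] \ge 2\pi^{(2-p)/2} \int_r^1 \frac{dt}{t^{p-1}\, d_p(t)}.
\end{equation*}
Since $S(1) > 0$, the quantity $S^{1-p/2}(1)$ is strictly positive, so dropping it only strengthens the left side in the desired direction and produces
\begin{equation*}
S^{1-p/2}(r) \ge (p-2)\, \pi^{(2-p)/2} \int_r^1 \frac{dt}{t^{p-1}\, d_p(t)}.
\end{equation*}

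To finish, since $1-p/2 < 0$ I invert (flipping the inequality) and raise both sides to the positive power $\tfrac{2}{p-2}$; the constant then simplifies via $\bigl(\pi^{(p-2)/2}\bigr)^{2/(p-2)} = \pi$ to produce the claimed bound \eqref{eqlem21}. The main technical obstacle I anticipate is this integration step: $S$ is only known to be monotone (hence differentiable a.e., potentially with a singular part), so the fundamental theorem of calculus is not directly available. The resolution is exactly to repackage the pointwise differential inequality in terms of the monotone auxiliary function $F$, for which the one-sided inequality $\int F' \le F(b)-F(a)$ suffices. Everything else is bookkeeping of exponents.
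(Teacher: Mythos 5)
Your proof is correct and follows essentially the same route as the paper: the auxiliary function $F(r)=-\tfrac{2}{p-2}S^{1-p/2}(r)$ is exactly the paper's $g_p$, and both arguments integrate the differential inequality of Lemma~\ref{lem1} over $[r,1]$, use the one-sided inequality $\int_r^1 g'\le g(1)-g(r)$ for the nondecreasing function, drop the positive term $S^{1-p/2}(1)$, and invert. Your explicit appeal to Lebesgue's theorem on monotone functions (so that no absolute continuity of $S$ is needed) is a welcome clarification of a step the paper leaves implicit, but it is not a different method.
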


\begin{proof}
By (\ref{eqlem1}) for a.a. $r\in [0,1],$
\begin{equation*}
\frac{S'(r)\,dr}{S^{\frac{p}{2}}(r)}\,\ge\,
2\pi^{\frac{2-p}{2}}\frac{dr}{r^{p-1}\,d_{p}(r)}\,,
\end{equation*}
and integrating over the segment $[r,1],$ we obtain
\begin{equation*}
\int\limits_{r}^{1}g_p^\prime(t)\,dt\,\ge\,
2\pi^{\frac{2-p}{2}}\int\limits_{r}^{1}\frac{dt}{t^{p-1}d_{p}(t)}\,,
\end{equation*}
where $g_p(t)=\frac{2}{2-p}S^\frac{2-p}{2}(t).$ This function is nondecreasing on $[0,1],$ and
\begin{equation*}
\int\limits_{r}^{1}g_{p}^\prime(t)dt\,\le\,
g_{p}(1)-g_{p}(r)=\frac{2}{2-p}\left(S^{\frac{2-p}{2}}(1)-S^{\frac{2-p}{2}}(r)\right)\,.
\end{equation*}
Combining the last two inequalities, we have
\begin{equation*}
\frac{S^{\frac{2-p}{2}}(1)-S^{\frac{2-p}{2}}(r)}{2-p}\,\ge
\pi^{\frac{2-p}{2}}\,
\int\limits_{r}^{1}\frac{dt}{t^{p-1}\, d_{p}(t)}\,.
\end{equation*}
This implies the inequality (\ref{eqlem21}), because for $p>2,$
\begin{equation*}
S^{\frac{2-p}{2}}(r)\,\ge\,
S^\frac{2-p}{2}(r)-S^\frac{2-p}{2}
(1)\,\ge\, \pi^\frac{2-p}{2}\,
(p-2)\int\limits_{r}^{1}\frac{dt}{r^{p-1}\,
d_{p}(t)}\,.
\end{equation*}
\end{proof}

\subsection{} The following result estimates the integrals of the type (\ref{dqp}) in the terms of integral averages for $Q.$

\medskip
\begin{lemma}\label{lem3}
Let $p>1$ and $Q\colon  \mathbb B\to [0,\infty]$ be locally integrable with the exponent $\frac{1}{p-1}$ in $\mathbb B.$ Then for any $\varepsilon\in[0;\frac{1}{2}]$ the following estimate
\begin{equation*}\label{eq1lem4}
\left(\int\limits_{\varepsilon}^{2\varepsilon}\frac{dr}{r^{p-1}q_{p}(r)}\right)^{-1}\le 2^{p-1}\,
\varepsilon^{p-2}\,
\left(\frac{1}{4\pi\varepsilon^{2}}\iint\limits_{B_{2\varepsilon}}Q^{\frac{1}{p-1}}(z)\,dxdy\right)^{p-1},
\end{equation*}
holds, where $q_{p}(r)$ is defined by (\ref{dqp}).
\end{lemma}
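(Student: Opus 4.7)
The plan is to reduce the one-dimensional integral on the left to the two-dimensional integral on the right by combining three ingredients: a rewriting of $q_{p}(r)$ in terms of the radial integral $\psi(r)=\int_{\gamma_{r}}Q^{1/(p-1)}(z)\,|dz|$, Jensen's inequality on $[\varepsilon,2\varepsilon]$, and Fubini's theorem on the annulus $B_{2\varepsilon}\setminus B_{\varepsilon}$.

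First I would observe that the definition (\ref{dqp}) reads $q_{p}(r)=(\psi(r)/(2\pi r))^{p-1}$, which gives $1/(r^{p-1}q_{p}(r))=(2\pi)^{p-1}\psi(r)^{-(p-1)}$ and therefore $\int_{\varepsilon}^{2\varepsilon}\frac{dr}{r^{p-1}q_{p}(r)}=(2\pi)^{p-1}\int_{\varepsilon}^{2\varepsilon}\psi(r)^{-(p-1)}\,dr$. Since $p>1$, the function $t\mapsto t^{-(p-1)}$ is convex on $(0,\infty)$, so Jensen's inequality for the normalized Lebesgue measure on $[\varepsilon,2\varepsilon]$ yields $\frac{1}{\varepsilon}\int_{\varepsilon}^{2\varepsilon}\psi(r)^{-(p-1)}\,dr\ge\left(\frac{1}{\varepsilon}\int_{\varepsilon}^{2\varepsilon}\psi(r)\,dr\right)^{-(p-1)}$. (Equivalently, one may derive the same bound by H\"older's inequality applied to $1=\psi^{(p-1)/p}\cdot\psi^{-(p-1)/p}$ with exponents $p/(p-1)$ and $p$.)

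Next, Fubini's theorem together with the definition of $\psi$ gives $\int_{\varepsilon}^{2\varepsilon}\psi(r)\,dr=\iint_{B_{2\varepsilon}\setminus B_{\varepsilon}}Q^{1/(p-1)}(z)\,dxdy\le\iint_{B_{2\varepsilon}}Q^{1/(p-1)}(z)\,dxdy$. Substituting this into the Jensen bound and taking reciprocals leaves only a routine cleanup of constants: using $(4\pi)^{p-1}=2^{p-1}(2\pi)^{p-1}$ and $\varepsilon^{p-2}\cdot\varepsilon^{-2(p-1)}=\varepsilon^{-p}$, one checks that the declared right-hand side of the lemma equals $\frac{1}{(2\pi)^{p-1}\varepsilon^{p}}\bigl(\iint_{B_{2\varepsilon}}Q^{1/(p-1)}\,dxdy\bigr)^{p-1}$, which is exactly what the three steps above produce.

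I do not anticipate a real obstacle here; the only points deserving a sentence are the applicability of Fubini (guaranteed by the assumed local integrability of $Q^{1/(p-1)}$) and the role of the hypothesis $p>1$, which is needed precisely to secure the convexity of $t^{-(p-1)}$ used in Jensen. Observing that $4\pi\varepsilon^{2}=m(B_{2\varepsilon})$ also clarifies why the normalization on the right-hand side of the lemma takes this particular form.
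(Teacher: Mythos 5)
Your proposal is correct and follows essentially the same route as the paper: you bound $\int_{\varepsilon}^{2\varepsilon}\psi^{-(p-1)}\,dr$ from below by $\varepsilon^{p}\bigl(\int_{\varepsilon}^{2\varepsilon}\psi\,dr\bigr)^{-(p-1)}$ (via Jensen, which you rightly note is equivalent to the H\"older argument the paper actually uses with exponents $p$ and $p/(p-1)$) and then pass to the area integral over $B_{2\varepsilon}$ by Fubini. The constant bookkeeping you carry out matches the stated bound, so nothing is missing.
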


\begin{proof}
Notify that
\begin{equation*}
\varepsilon=\int\limits_{\varepsilon}^{2\varepsilon}\frac{1}{\left(\int\limits_{\gamma_{r}}Q^{\frac{1}{p-1}}(z)\,|dz|\right)^{\frac{p-1}{p}}}
\cdot\left(\int\limits_{\gamma_{r}}Q^{\frac{1}{p-1}}(z)\,|dz|\right)^{\frac{p-1}{p}}dr\,,
\end{equation*}
one obtains by H\"older's inequality with the  exponents $p$ and $p^{'}=p/(p-1),$
\begin{equation*}
\varepsilon^{p}\le\int\limits_{\varepsilon}^{2\varepsilon}\frac{dr}{\left(\int\limits_{\gamma_{r}}Q^{\frac{1}{p-1}}(z)\,|dz|\right)^{p-1}}
\cdot\left(\int\limits_{\varepsilon}^{2\varepsilon}\int\limits_{\gamma_{r}}Q^{\frac{1}{p-1}}(z)\,|dz|\,dr\right)^{p-1}.
\end{equation*}
Now by the Fubini theorem,
\begin{equation*}
\left(\int\limits_{\varepsilon}^{2\varepsilon}\frac{dr}
{\left(\int\limits_{\gamma_{r}}Q^{\frac{1}{p-1}}(z)\,|dz|\right)^{p-1}}\right)^{-1}\le 4^{p-1}\pi^{p-1}\varepsilon^{p-2}\left(\frac{1}{\pi(2\varepsilon)^{2}}\iint\limits_{B_{2\varepsilon}}Q^
{\frac{1}{p-1}}(z)\,dx\,dy\right)^{p-1},
\end{equation*}
which completes the proof.
\end{proof}

\subsection{} We also provide a lower bound for $S(r)$ when $1<p<2.$

\medskip
\begin{lemma}\label{lem4}
Let $f\colon  \mathbb B\to \mathbb
B$ be a regular $W^{1, 1}_{\rm
loc}$-homeomorphism possessing Lusin's $(N)$-property, and $1<p<2.$ Then for a.a. $r\in [0,1],$
\begin{equation}\label{eqlem4}
S(r)\,\ge\,\pi\,(2-p)^\frac{2}{2-p}\left(\int\limits_0^r\frac{dt}{t^{p-1}\, d_{p}(t)}\right)^\frac{2}{2-p}\,.
\end{equation}
\end{lemma}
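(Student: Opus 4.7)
The plan is to mirror the proof of Lemma~\ref{lem2}, but integrate from $0$ (rather than to $1$) and exploit the fact that for $1<p<2$ the exponent $(2-p)/2$ is now \emph{positive}, so the boundary contribution at the origin vanishes.

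First, I would start from the differential inequality (\ref{eqlem1}) supplied by Lemma~\ref{lem1}, which holds for a.a.\ $r\in[0,1]$ for any $p>1$, and rewrite it as
\begin{equation*}
\frac{S'(r)}{S^{p/2}(r)}\,\ge\,2\pi^{(2-p)/2}\,\frac{1}{r^{p-1}\,d_p(r)}.
\end{equation*}
Setting $g_p(t)=\frac{2}{2-p}\,S^{(2-p)/2}(t)$, the left-hand side equals $g_p'(t)$ wherever the classical derivatives exist. Because $1<p<2$, the exponent $(2-p)/2$ is positive, so $g_p$ is a nondecreasing function of $t$.

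Next, I would integrate over $[0,r]$. Since $g_p$ is monotone, we have $\int_0^r g_p'(t)\,dt\le g_p(r)-g_p(0^+)$; and since $S$ is continuous with $S(t)=m(f(B_t))\to 0$ as $t\to 0^+$ and $(2-p)/2>0$, we get $g_p(0^+)=0$. Therefore
\begin{equation*}
\frac{2}{2-p}\,S^{(2-p)/2}(r)\,\ge\,2\pi^{(2-p)/2}\int_0^r\frac{dt}{t^{p-1}\,d_p(t)}.
\end{equation*}
Simplifying and raising both sides to the power $2/(2-p)>0$ (the right-hand side is nonnegative, so this is monotone) gives exactly (\ref{eqlem4}).

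The main technical point is the vanishing of the boundary term at $0$: unlike the proof of Lemma~\ref{lem2}, where the boundary contribution at $1$ was discarded by a crude monotonicity estimate because $(2-p)/2<0$ made $S^{(2-p)/2}(1)$ subtract favorably, here we genuinely need $g_p(0^+)=0$, which rests on the continuity of $S$ at $0$ and on having $(2-p)/2>0$. The monotonicity argument $\int_0^r g_p'\le g_p(r)-g_p(0^+)$ (valid for any nondecreasing function) is what allows us to bypass absolute continuity of $S$; no further regularity on the mapping is needed beyond what Lemma~\ref{lem1} already assumed.
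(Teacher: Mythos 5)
Your proof is correct and essentially identical to the paper's: the authors likewise integrate (\ref{eqlem1}) over $[0,r]$ via the substitution $g_p(t)=\tfrac{2}{2-p}S^{\frac{2-p}{2}}(t)$, use the monotone-function inequality exactly as in Lemma~\ref{lem2}, and conclude from $S(0)=0$ that the boundary term at the origin vanishes. Your explicit justification that $g_p(0^+)=0$ (using $S(t)\to 0$ and $(2-p)/2>0$) is just a spelled-out version of the paper's remark ``because $S(0)=0$.''
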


\begin{proof}
Integrating (\ref{eqlem1}) along the segment $[0,r],$ and arguing similar to the proof of Lemma~\ref{lem2}, we arrive at
\begin{equation*}
\frac{S^{\frac{2-p}{2}}(r)-S^{\frac{2-p}{2}}(0)}{2-p}\,\ge
\pi^{\frac{2-p}{2}}\,
\int\limits_{0}^{r}\frac{dt}{t^{p-1}\, d_{p}(t)}\,,
\end{equation*}
which implies (\ref{eqlem4}), because $S(0)=0.$
\end{proof}

\medskip
Lemma~\ref{lem4} yields that for any homeomorphism $f\colon \mathbb B\to \mathbb B$ satisfying its assumptions, the integral $\int_0^1\,dt/(t^{p-1}d_p(t))$ always converges.

\medskip
Indeed, for $1<p<2,$ we have the estimate
\begin{equation*}
\pi(2-p)^\frac{2}{2-p}\,\left(\int\limits_0^r\frac{dt}{t^{p-1}d_p(t)}\right)^\frac{2}{2-p}\,\le\,S(r)\,\le\,\pi\,,
\end{equation*}
which implies
\begin{equation*}
\int\limits_0^r\frac{dt}{t^{p-1}d_p(t)}\,\le\,\frac{1}{2-p}\,,\quad \forall r\in (0,1).
\end{equation*}
Now letting $r\uparrow 1,$ one obtains
\begin{equation*}
\int\limits_0^1\frac{dt}{t^{p-1}d_p(t)}\,\le\,\frac{1}{2-p}\,<\,\infty.
\end{equation*}

\section{Proofs of upper bounds and illustrating examples}

\subsection{} We start with the proof of Theorem~\ref{th1}.

\begin{proof} Let $l_{f}(r)=\min\limits_{|z|=r}|f(z)|$. Since $f(0)=0,$ $\pi l^{2}_{f}(r)\le S(r)$, or equivalently,
\begin{equation}\label{eqth3}
l_{f}(r)\le\sqrt{\frac{S(r)}{\pi}}.
\end{equation}

It follows from combining Lemmas~\ref{lem2} and \ref{lem3} that for $p>2,$
\begin{equation}\label{eqth4}
\frac{S(r)}{\pi r^{2}}\le c^\prime_{p}\,\left(\frac{1}{4\pi r^2}
\iint\limits_{B_{2r}} D_{p}^{\frac{1}{p-1}}(z)\,dxdy\right)^{\frac{2(p-1)}{p-2}},
\end{equation}
where $r\in[0,\frac{1}{2}]$ and $c^\prime_{p}$ is a positive constant depending only on $p.$ Here we have weakened the inequality (\ref{eqlem21}) replacing the limits $r$ and $1$ by $r$ and $2r,$ $r\in [0,\frac{1}{2}].$

Now by (\ref{eqth3}), (\ref{eqth4}) and (\ref{eqth1}),
\begin{equation*}
\begin{split}
\liminf\limits_{z\rightarrow0}\frac{|f(z)|}{|z|}&=\liminf\limits_{r\rightarrow0}\frac{l_{f}(r)}{r}
\le\liminf\limits_{r\rightarrow0}\sqrt{\frac{S(r)}{\pi r^{2}}}
\\&\le\liminf\limits_{r\rightarrow0}\sqrt{c_{p}^{'}\left(\frac{1}{4\pi r^2}
\iint\limits_{B_{2r}}D_{p}^{\frac{1}{p-1}}(z)\,dxdy\right)^{\frac{2(p-1)}{p-2}}}\leq
c_{p}\,k^{\frac{1}{p-2}},
\end{split}
\end{equation*}
with $c_{p}>0$ depending only on $p.$ This completes the proof.
\end{proof}

\subsection{} We now illustrate the sharpness of the growth order in the inequality (\ref{eqth2}).

\medskip
\begin{example}\label{example1}
Consider $f\colon \mathbb{B} \to
\mathbb{B}$ given by $f(z)=kz,$ $0<|k|\le 1.$
\end{example}

By a direct calculation, letting $z=re^{i\theta},$ the partial derivatives
\begin{equation*}
f_{\theta}=kire^{i\theta}, \quad f_{r}=ke^{i\theta}
\end{equation*}
and
\begin{equation*}
J_{f}\left(re^{i\theta}\right)=\frac{1}{r}\Im\left(f_{\theta}\cdot \overline{f_{r}}\right) =|k|^{2}\,.
\end{equation*}

Hence,
\begin{equation*}
D_{p}\left(re^{i\theta}\right)=\frac{|f_{\theta}|^{p}}
{r^{p}J_{f}\left(re^{i\theta}\right)}=|k|^{p-2}\,,
\end{equation*}
and therefore,
\begin{equation*}
\lim\limits_{r\to 0}\left(\frac{1}{\pi r^2}\iint\limits_{B_r}D_{p}^{\frac{1}{p-1}}(z)\,dxdy\right)^{p-1}=|k|^{p-2}.
\end{equation*}

On the other hand,
\begin{equation*}
\lim\limits_{z\to0}\frac{|f(z)|}{|z|}=|k|.
\end{equation*}

\subsection{} The next example shows that the condition (\ref{eqth1}) in Theorem~\ref{th1} cannot be omitted.

\medskip
\begin{example}
Assume that $p>2$ and  $f$ is an automorphism of the unit disc $\mathbb{B}$ of the form
\begin{equation*}
f(z)=\left(1+(p-2)\,\int\limits_{|z|}^{1}\frac{dt}{t^{p-1}\left(\ln\frac{e}{t}\right)^{p-1}}\right)^{\frac{1}{2-p}}\,\frac{z}{|z|},\quad\text{for}\quad z\ne 0,
\end{equation*}
and $f(0)=0.$
\end{example}

We shall use the notations
\begin{equation*}
I(r)=1+(p-2)\,\int\limits_{r}^{1}\frac{dt}{t^{p-1}\left(\ln\frac{e}{t}\right)^{p-1}}
\end{equation*}
for $r\ne 0,$ $\,R(r)=I^{-\frac{1}{p-2}}(r),$ if $r\ne 0,$ and extend $R(r)$ to the origin by $R(0)=0.$

Then $f(z)=R(r)e^{i\theta},$ where $z=re^{i\theta}.$ Note that $f$ is a diffeomorphism in $\mathbb{B}\setminus\{0\},$ which implies that $f\in W_{\rm loc}^{1,2}(\mathbb{B}\setminus\{0\}),$ although $f$ belongs to  $W^{1,1}$ in the whole disc $\mathbb B.$ Denote by $\lambda_1$ and $\lambda_2$ the maximal and minimal stretchings of $f$ at $z=re^{i\theta},$ $0<r<1,$ then $\lambda_1=R(r)/r,$ $\lambda_2=R^\prime(r)=R(r)r^{1-p}\ln^{1-p}(e/r)I^{-1}(r),$ and
\begin{equation*}
\iint\limits_{B_{r}}\lambda_1\,dxdy=\iint\limits_{B_{r}}\frac{R(|z|)}{|z|}\,\,dxdy\le
M_{r}\iint\limits_{B_{r}}\frac{dxdy}{|z|}=2\pi r M_{r}<\infty,
\end{equation*}
where $M_{r}=\max\limits_{z\in \overline B_{r}}|f(z)|$, $\overline B_{r}=\{z\in\mathbb{C}\,:\,|z|\le r\}.$
By a direct calculation, $f_{r}=R^{'}(r)e^{i\theta},$ $f_{\theta}=iR(r)e^{i\theta},$ and
\begin{equation*}
J_{f}(re^{i\theta})=\frac{1}{r}\Im\left(f_{\theta}\cdot
\overline{f_{r}}\right)=\frac{R(r) R^\prime(r)}{r}.
\end{equation*}
Hence,
\begin{equation*}
D_{p}(re^{i\theta})=\frac{|f_{\theta}|^{p}}{r^{p}J_{f}(re^{i\theta})}=\frac{R^{p-1}(r)}{r^{p-1}R^{'}(r)}=\left(\ln\frac{e}{r}\right)^{p-1}\,,
\end{equation*}
and therefore,
\begin{equation*}
\liminf\limits_{r\rightarrow 0}\frac{1}{\pi r^2}\iint\limits_{B_{r}}
D^{\frac{1}{p-1}}_{p}(z)\,dxdy=\infty.
\end{equation*}
Now instead of (\ref{eqth2}), we have by L'Hopital's rule,
\begin{equation*}
\liminf\limits_{z\rightarrow0}\frac{|f(z)|}{|z|}=\infty.
\end{equation*}

\subsection{} As the consequences of Theorem~\ref{th1}, we obtain

\medskip
\begin{corollary}
If $D_{p}(z)\le k<\infty$ for a.a. $z\in \mathbb{B}$, then
\begin{equation*}
\liminf\limits_{z\rightarrow0}\frac{|f(z)|}{|z|}\leqslant c_{p}\,k^{\frac{1}{p-2}},
\end{equation*}
with a positive constant $c_{p}$ depending only on $p.$
\end{corollary}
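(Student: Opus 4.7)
The plan is to deduce this corollary directly from Theorem~\ref{th1} by verifying that the pointwise bound $D_p(z)\le k$ implies the integral hypothesis \eqref{eqth1}.

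First, I would raise the assumed inequality $D_p(z)\le k$ to the power $1/(p-1)$, which is legal since $p>2$ and $D_p\ge 0$, yielding $D_p^{1/(p-1)}(z)\le k^{1/(p-1)}$ for a.a. $z\in\mathbb{B}$. Integrating over $B_r$ and dividing by $\pi r^2$ gives
\begin{equation*}
\frac{1}{\pi r^2}\iint\limits_{B_r} D_p^{\frac{1}{p-1}}(z)\,dxdy \,\le\, k^{\frac{1}{p-1}}
\end{equation*}
for every $r\in (0,1)$. Raising both sides to the power $p-1$ and taking $\liminf_{r\to 0}$ shows that the quantity in \eqref{eqth1} is bounded by the very same constant $k$.

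Consequently, the hypotheses of Theorem~\ref{th1} are met with this $k$, and the conclusion $\liminf_{z\to 0}|f(z)|/|z|\le c_p\,k^{1/(p-2)}$ follows immediately, with the same constant $c_p$ depending only on $p$.

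There is really no obstacle here; the corollary is simply the statement that a uniform bound on $D_p$ trivially controls its integral means. The only point worth checking is that the exponent manipulations and the monotonicity of the $(p-1)$-th power (valid because $p>2>1$) are applied correctly, and that the $\liminf$ over $r$ in \eqref{eqth1} is indeed dominated by the constant upper bound we just derived.
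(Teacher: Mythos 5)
Your proposal is correct and is exactly the intended deduction: the pointwise bound $D_p(z)\le k$ immediately dominates the integral means in \eqref{eqth1} by the same constant $k$, and the corollary then follows from Theorem~\ref{th1}, which is precisely how the paper presents it (as a direct consequence).
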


\medskip
\begin{corollary}
Let $f\colon \mathbb B\to \mathbb B$ be a regular homeomorphism of Sobolev class $W^{1, 1}_{\rm loc},$
possessing Lusin's $(N)$-property and satisfying $f(0)=0.$ Assume that $p>2$
and
\begin{equation*}
\liminf\limits_{r\rightarrow0}\frac{1}{\pi r^2}\iint\limits_{B_{r}}D_{p}^{\frac{1}{p-1}}(z)\,dxdy=0.
\end{equation*}
Then
\begin{equation*}
\liminf\limits_{z\rightarrow0}\frac{|f(z)|}{|z|}=0.
\end{equation*}
\end{corollary}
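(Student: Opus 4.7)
The plan is to deduce this corollary as an immediate specialization of Theorem~\ref{th1}, using the fact that the hypothesis of the corollary is strictly stronger than \eqref{eqth1} for every admissible value of the constant $k$.

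First, I would observe that because $p>1$, raising a nonnegative quantity to the power $p-1$ preserves the limit inferior at $0$. Hence the assumed condition
\begin{equation*}
\liminf\limits_{r\rightarrow0}\frac{1}{\pi r^2}\iint\limits_{B_{r}}D_{p}^{\frac{1}{p-1}}(z)\,dxdy=0
\end{equation*}
is equivalent to
\begin{equation*}
\liminf\limits_{r\rightarrow0}\left(\frac{1}{\pi r^2}\iint\limits_{B_{r}}D_{p}^{\frac{1}{p-1}}(z)\,dxdy\right)^{p-1}=0.
\end{equation*}

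Next, I would fix an arbitrary $k>0$. The equality above certainly implies that the left-hand side is $\le k$, so the hypothesis \eqref{eqth1} of Theorem~\ref{th1} is satisfied. Applying Theorem~\ref{th1} then yields
\begin{equation*}
\liminf\limits_{z\rightarrow0}\frac{|f(z)|}{|z|}\le c_{p}\,k^{\frac{1}{p-2}},
\end{equation*}
where $c_p$ depends only on $p$ and is in particular independent of $k$.

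Finally, I would let $k\downarrow 0$. Since $p>2$, the exponent $\frac{1}{p-2}$ is strictly positive, so $k^{\frac{1}{p-2}}\to 0$. Combined with the obvious inequality $|f(z)|/|z|\ge 0$, this forces $\liminf_{z\to 0}|f(z)|/|z|=0$, as required. There is no genuine obstacle here; the argument is a direct limiting consequence of Theorem~\ref{th1} together with the positivity of the exponent $\frac{1}{p-2}$ in the range $p>2$.
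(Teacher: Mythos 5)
Your proposal is correct and matches the paper's (implicit) argument: the corollary is stated there as a direct consequence of Theorem~\ref{th1}, and your proof—noting that the liminf hypothesis with value $0$ satisfies \eqref{eqth1} for every $k>0$, applying the theorem, and letting $k\downarrow 0$ using that $c_p$ is independent of $k$ and $\tfrac{1}{p-2}>0$—is exactly the intended deduction.
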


We illustrate the last corollary by

\medskip
\begin{example}
Take a radial stretching $f(z)=z|z|^{\alpha}$, $\alpha>0,$ of the unit disc $\mathbb B.$
\end{example}

\medskip
Then, letting $z=re^{i\theta}$, $f(z)=r^{\alpha+1}e^{i\theta},$
\begin{equation*}
f_{\theta}=ir^{\alpha+1}e^{i\theta},\quad
f_{r}=(\alpha+1)r^{\alpha}e^{i\theta}
\end{equation*}
and
\begin{equation*}
J_{f}\left(re^{i\theta}\right)=\frac{1}{r}\Im\left(f_{\theta}
\cdot \bar f_r\right)=(\alpha+1)r^{2\alpha}.
\end{equation*}
Therefore,
\begin{equation*}
D_{p}\left(re^{i\theta}\right)=\frac{|f_{\theta}|^{p}}{r^{p}J_{f}\left(re^{i\theta}\right)}=\frac{1}{\alpha+1}\cdot
r^{\alpha(p-2)},
\end{equation*}
and for $p>2,$
\begin{equation*}
\lim\limits_{r\to 0}\frac{1}{\pi r^2}\iint\limits_{B_r}D_{p}^{\frac{1}{p-1}}(z)\,dxdy=0.
\end{equation*}
On the other hand,
\begin{equation*}
\lim\limits_{z\to0}\frac{|f(z)|}{|z|}=0.
\end{equation*}

\subsection{} The asymptotic dilation of regular homeomorphisms $f\colon \mathbb B\to \mathbb B$ can be established also by applying a different assumption, which we call $\limsup$ condition.

\medskip
\begin{theorem}\label{th3}
Let $f\colon \mathbb B\to \mathbb B$ be a regular homeomorphism of Sobolev class $W^{1,1}_{\rm loc}$ possessing Lusin's $(N)$-property and normalized by $f(0)=0$. Suppose that $p>2$ and there exists $k_0$ such that
\begin{equation*}
k_0=\limsup\limits_{r\rightarrow0}\,r^{p-2}\int\limits_r^1 \frac{dt}{t^{p-1}d_p(t)}.
\end{equation*}
Then
\begin{equation}\label{eq1th3}
\liminf\limits_{z\rightarrow0}\frac{|f(z)|}{|z|}\le (p-2)^\frac{1}{2-p}\,k_0^{\frac{1}{2-p}}.
\end{equation}
\end{theorem}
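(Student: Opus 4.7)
The plan is to prove Theorem~\ref{th3} as a direct consequence of Lemma~\ref{lem2}, essentially by rearranging the bound and matching the exponents to convert the $\limsup$ hypothesis on $r^{p-2}\int_r^1 dt/(t^{p-1}d_p(t))$ into a $\liminf$ bound on $|f(z)|/|z|$.

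First I would reuse the opening step from the proof of Theorem~\ref{th1}: set $l_f(r)=\min_{|z|=r}|f(z)|$, and observe that since $f$ is a homeomorphism with $f(0)=0$, we have $B_{l_f(r)}\subset f(B_r)$, hence $\pi l_f^2(r)\le S(r)$, which gives
\begin{equation*}
\frac{l_f(r)}{r}\le \frac{1}{r}\sqrt{\frac{S(r)}{\pi}}.
\end{equation*}
Next, since the hypotheses of Lemma~\ref{lem2} are satisfied, I would plug in the upper bound
\begin{equation*}
S(r)\le \pi\,(p-2)^{-\frac{2}{p-2}}\,A(r)^{-\frac{2}{p-2}}, \qquad A(r):=\int_r^1\frac{dt}{t^{p-1}\,d_p(t)},
\end{equation*}
to obtain
\begin{equation*}
\frac{l_f(r)}{r}\le (p-2)^{-\frac{1}{p-2}}\,r^{-1}\,A(r)^{-\frac{1}{p-2}}.
\end{equation*}

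The key algebraic step is to absorb the factor $r^{-1}$ into the bracket: since $r^{-1}=r^{-(p-2)\cdot\frac{1}{p-2}}$ and the exponent of $A(r)$ is $-\frac{1}{p-2}$, we can rewrite
\begin{equation*}
r^{-1}\,A(r)^{-\frac{1}{p-2}}=\bigl(r^{p-2}\,A(r)\bigr)^{-\frac{1}{p-2}}.
\end{equation*}
This gives
\begin{equation*}
\frac{l_f(r)}{r}\le (p-2)^{-\frac{1}{p-2}}\,\bigl(r^{p-2}A(r)\bigr)^{-\frac{1}{p-2}}.
\end{equation*}

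Finally, since $p>2$, the function $x\mapsto x^{-1/(p-2)}$ is strictly decreasing on $(0,\infty)$, so taking $\liminf$ as $r\to 0$ on the right hand side converts the $\limsup$ of $r^{p-2}A(r)$ into a $\liminf$ of the whole expression. Using the definition of $k_0$ and $\liminf_{z\to 0}|f(z)|/|z|=\liminf_{r\to 0}l_f(r)/r$, this yields
\begin{equation*}
\liminf_{z\to 0}\frac{|f(z)|}{|z|}\le (p-2)^{-\frac{1}{p-2}}\,k_0^{-\frac{1}{p-2}}=(p-2)^{\frac{1}{2-p}}\,k_0^{\frac{1}{2-p}},
\end{equation*}
which is exactly \eqref{eq1th3}. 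There is no real obstacle here; the only point that requires care is verifying the exponent bookkeeping in the step $r^{-1}A(r)^{-1/(p-2)}=(r^{p-2}A(r))^{-1/(p-2)}$, and noting that the monotonicity of $x^{-1/(p-2)}$ (valid precisely because $p>2$) legitimately exchanges $\liminf$ on the left with $\limsup$ inside the bracket on the right.
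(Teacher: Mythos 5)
Your proposal is correct and follows essentially the same route as the paper: the authors likewise combine $\pi l_f^2(r)\le S(r)$ (as at the start of the proof of Theorem~\ref{th1}) with the bound \eqref{eqlem21} of Lemma~\ref{lem2} and then pass to the limit, which is exactly your rearrangement $r^{-1}A(r)^{-1/(p-2)}=(r^{p-2}A(r))^{-1/(p-2)}$ followed by the monotonicity of $x\mapsto x^{-1/(p-2)}$. Your write-up just makes the exponent bookkeeping and the $\limsup$-to-$\liminf$ exchange explicit, which the paper leaves implicit.
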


\begin{proof}
Arguing similarly to the beginning of the proof of Theorem~\ref{th1} and applying the upper bound (\ref{eqlem21}) for $S(r),$ one gets
\begin{equation*}
\begin{split}
\liminf\limits_{z\rightarrow0}\frac{|f(z)|}{|z|}&\le\liminf\limits_{r\rightarrow0}\sqrt{\frac{S(r)}{\pi r^{2}}}
\\&\le\liminf\limits_{r\rightarrow 0}\frac{(p-2)^\frac{1}{2-p}}{r}\,\left(
\int\limits_r^1 \frac{dt}{t^{p-1}d_p(t)}\right)^\frac{1}{2-p}\,=\,
(p-2)^\frac{1}{2-p}\,k_0^\frac{1}{2-p}.
\end{split}
\end{equation*}
\end{proof}

The sharpness of the upper bound (\ref{eq1th3}) in Theorem~\ref{th3} is illustrated by Example~\ref{example1}. Indeed, by (\ref{dqp}) $D_p(z)=|k|^{p-2},$ thus, $d_p(t)=|k|^{p-2}$ for all $t\in(0,1).$ Therefore,
\begin{equation*}
\limsup\limits_{r\rightarrow0}\,r^{p-2}\int\limits_r^1 \frac{dt}{t^{p-1}d_p(t)}=\frac{1}{(p-2)|k|^{p-2}}.
\end{equation*}
Denoting $k_0=\frac{1}{(p-2)|k|^{p-2}},$ we have $|k|=(p-2)^\frac{1}{2-p}\,k_0^\frac{1}{2-p},$ which coincides with $\lim_{z\to 0}|f(z)|/|z|.$

\section{Lower and upper bounds for the ratio $|f(z)|/|z|$ and for Jacobian}

\subsection{} The asymptotic dilation of regular homeomorphisms $f\colon \mathbb B\to \mathbb B$ can be derived by applying the following $\limsup$ conditions.

\medskip
\begin{theorem}\label{th5}
Let $f\colon \mathbb B\to \mathbb B$ be a regular homeomorphism of Sobolev class $W^{1,1}_{\rm loc}$ possessing Lusin's $(N)$-property, and $f(0)=0$. Suppose that $1<p<2$ and there exists $k_0$ such that
\begin{equation*}
k_0=\limsup\limits_{r\rightarrow0}\,r^{p-2}\int\limits_0^r \frac{dt}{t^{p-1}d_p(t)}\,.
\end{equation*}
Then
\begin{equation}\label{eq1th5}
\limsup\limits_{z\rightarrow0}\frac{|f(z)|}{|z|}\ge (2-p)^\frac{1}{2-p}\,k_0^{\frac{1}{2-p}}.
\end{equation}
\end{theorem}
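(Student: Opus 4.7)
The plan is to mirror the proof of Theorem~\ref{th3}, but with two natural modifications suited to the range $1<p<2$: replace the minimum stretching $l_f(r)$ by the \emph{maximum} stretching (so as to produce a lower bound on $|f(z)|/|z|$), and replace the upper bound on $S(r)$ given by Lemma~\ref{lem2} by the lower bound of Lemma~\ref{lem4}, which is precisely the tool available in this range.

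Concretely, I would set $L_f(r)=\max_{|z|=r}|f(z)|$. Because $f$ is a homeomorphism with $f(0)=0$, the compact set $f(\overline{B_r})$ contains the origin and is contained in the closed disc $\overline{B_{L_f(r)}}$, which yields $S(r)\le\pi L_f^2(r)$, and hence $L_f(r)/r\ge\sqrt{S(r)/(\pi r^2)}$. Choosing for each $r$ a point $z_r\in\gamma_r$ with $|f(z_r)|=L_f(r)$, one gets
\[
\limsup\limits_{z\to 0}\frac{|f(z)|}{|z|}\ge\limsup\limits_{r\to 0}\frac{L_f(r)}{r}\ge\limsup\limits_{r\to 0}\sqrt{\frac{S(r)}{\pi r^2}}.
\]

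Next, I would insert the lower bound \eqref{eqlem4} from Lemma~\ref{lem4} and use the algebraic identity $(r^{p-2})^{1/(2-p)}=r^{-1}$ to absorb the factor $1/r$ inside the power, producing
\[
\sqrt{\frac{S(r)}{\pi r^2}}\ge (2-p)^{\frac{1}{2-p}}\left(r^{p-2}\int\limits_0^r\frac{dt}{t^{p-1}d_p(t)}\right)^{\frac{1}{2-p}}.
\]
Since $1/(2-p)>0$, the map $x\mapsto x^{1/(2-p)}$ is continuous and increasing on $[0,\infty)$, so it commutes with $\limsup_{r\to 0}$, and the latter operation applied to $r^{p-2}\int_0^r dt/(t^{p-1}d_p(t))$ equals $k_0$ by hypothesis; this delivers \eqref{eq1th5}.

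I do not foresee a serious obstacle here, as the argument is a direct dual of Theorem~\ref{th3}. The only step requiring a little attention is the inclusion $f(\overline{B_r})\subset\overline{B_{L_f(r)}}$, which is immediate from $f$ being a homeomorphism sending $0$ to $0$, together with the continuity of $|f|$ on $\overline{B_r}$; everything else is bookkeeping with exponents and a monotone-function argument for passing the $\limsup$ inside the power.
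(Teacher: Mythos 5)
Your proposal is correct and follows essentially the same route as the paper's own proof: introduce $\mathcal L_f(r)=\max_{|z|=r}|f(z)|$, use $S(r)\le\pi\mathcal L_f^2(r)$, insert the lower bound of Lemma~\ref{lem4}, and pass the $\limsup$ through the increasing power $x\mapsto x^{1/(2-p)}$. The extra details you supply (the inclusion $f(\overline{B_r})\subset\overline{B_{\mathcal L_f(r)}}$ and the monotone-function argument for the $\limsup$) are exactly the steps the paper leaves implicit.
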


\begin{proof}
Denote $\mathcal L_f(r)=\max\limits_{|z|=r}|f(z)|$. Since $f(0)=0,$ we have $\pi \mathcal L^{2}_{f}(r)\ge S(r)$, or equivalently,
\begin{equation*}
\mathcal L_{f}(r)\ge\sqrt{\frac{S(r)}{\pi}}\,.
\end{equation*}
Thus,
\begin{equation*}
\begin{split}
\limsup\limits_{z\rightarrow0}\frac{|f(z)|}{|z|}&\ge\limsup\limits_{r\rightarrow0}\sqrt{\frac{S(r)}{\pi r^{2}}}
\\&\ge\limsup\limits_{r\rightarrow 0}\frac{(2-p)^\frac{1}{2-p}}{r}\,\left(
\int\limits_0^r \frac{dt}{t^{p-1}d_p(t)}\right)^\frac{1}{2-p}\,=\,
(2-p)^\frac{1}{2-p}\,k_0^\frac{1}{2-p}.
\end{split}
\end{equation*}
\end{proof}

We illustrate the sharpness of the lower bound (\ref{eq1th5}) in Theorem~\ref{th5} by the same example (Example~\ref{example1}). Indeed, the condition (\ref{eq1th5}) is fulfilled with $k_0=|k|^{2-p}(2-p)^{-1},$ and $\lim_{r\to 0}|f(z)|/|z|=|k|<\infty.$

\subsection{} As a consequence of Theorems~\ref{th5} and \ref{th3}, one obtains, when the ratio $|f(z)|/|z|$ has only one limit point, the following statement.

\medskip
\begin{theorem}\label{th6}
Let $f\colon \mathbb B\to \mathbb B$ be a regular homeomorphism of Sobolev class $W^{1,1}_{\rm loc}$ possessing Lusin's $(N)$-property, and $f(0)=0$. Suppose that $1<p<2$ and there exist $k_1$ and $k_2$ such that
\begin{equation*}
k_1=\limsup\limits_{r\rightarrow0}\,r^{p-2}\int\limits_0^r \frac{dt}{t^{p-1}d_p(t)}\,,
\end{equation*}
\begin{equation*}
k_2=\limsup\limits_{r\rightarrow0}\,r^{p^\prime-2}\int\limits_r^1 \frac{dt}{t^{p^\prime-1}d_{p^\prime}(t)}\,.
\end{equation*}
Then, if $\lim\limits_{z\rightarrow0}|f(z)|/|z|=A$, the following double estimate
\begin{equation}\label{eq1th6}
(2-p)^\frac{1}{2-p}\,k_1^\frac{1}{2-p}\,\le\,A\,\le\, (p^\prime-2)^\frac{1}{2-p^\prime}\,k_2^\frac{1}{2-p^\prime}\,,
\end{equation}
holds. Here $p^\prime=p/(p-1),$  $2<p^\prime<\infty.$
\end{theorem}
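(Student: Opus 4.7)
The plan is to recognize this as an immediate consequence of combining Theorem~\ref{th5} (which gives a lower bound via the $\limsup$ of $|f(z)|/|z|$ for $1<p<2$) with Theorem~\ref{th3} (which gives an upper bound via the $\liminf$ for $p>2$), using the hypothesis that the limit $A=\lim_{z\to 0}|f(z)|/|z|$ exists so that $\liminf$ and $\limsup$ both collapse to $A$. The key observation is that since $1<p<2$, the conjugate exponent $p'=p/(p-1)$ satisfies $p'>2$, so the two theorems can be applied in tandem on the same mapping $f$ using the two dilatations $d_p$ and $d_{p'}$.

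\textbf{Lower bound.} First I would apply Theorem~\ref{th5} directly with the given $p\in (1,2)$ and with $k_0=k_1$. By hypothesis the $\limsup$ defining $k_1$ exists and is finite, so the theorem yields
\begin{equation*}
\limsup_{z\to 0}\frac{|f(z)|}{|z|}\;\ge\;(2-p)^{\frac{1}{2-p}}\,k_1^{\frac{1}{2-p}}.
\end{equation*}
Since the limit $A$ exists, $\limsup_{z\to 0}|f(z)|/|z|=A$, producing the left-hand inequality of \eqref{eq1th6}.

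\textbf{Upper bound.} Next I would apply Theorem~\ref{th3} with the exponent $p$ of that theorem replaced by $p'=p/(p-1)>2$, and with $k_0=k_2$. The hypothesis on $k_2$ is precisely the $\limsup$ condition required by Theorem~\ref{th3} for the exponent $p'$, hence
\begin{equation*}
\liminf_{z\to 0}\frac{|f(z)|}{|z|}\;\le\;(p'-2)^{\frac{1}{2-p'}}\,k_2^{\frac{1}{2-p'}}.
\end{equation*}
Using again that the limit exists, $\liminf_{z\to 0}|f(z)|/|z|=A$, which gives the right-hand inequality of \eqref{eq1th6}. Combining the two bounds completes the proof.

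I do not anticipate any real obstacle: the theorem is a packaging statement, and the entire content lies in noticing that $p$ and $p'$ together cover the two regimes in which Lemmas~\ref{lem2} and \ref{lem4} produce matching area bounds for $S(r)$. The only point requiring a moment of care is verifying that the exponents $(p'-2)^{1/(2-p')}$ obtained from Theorem~\ref{th3} with $p\mapsto p'$ agree literally with the right-hand side written in \eqref{eq1th6}, which is immediate from substitution.
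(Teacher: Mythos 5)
Your proposal is correct and follows exactly the paper's argument: the paper also deduces Theorem~\ref{th6} by applying Theorem~\ref{th5} with the given $p\in(1,2)$ and Theorem~\ref{th3} with the conjugate exponent $p^\prime=p/(p-1)>2$, using the existence of the limit $A$ to identify it with both the $\limsup$ and the $\liminf$ of $|f(z)|/|z|$.
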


\begin{proof}
The proof immediately follows from the assertions of the above theorems for $p,$ $1<p<2,$ and $p^\prime=p/(p-1),$ $2<p^\prime<\infty.$
\end{proof}

\begin{remark} The inequality (\ref{eq1th6}) provides the following relation between the constants $k_1$ and $k_2$ in Theorem~\ref{th6},
\begin{equation*}
k_1\le \frac{(p-1)^{p-1}}{(2-p)^p\,k_2^{p-1}}, \quad 1<p<2\,.
\end{equation*}
\end{remark}

\subsection{} The estimates established in Lemmas~\ref{lem2} and \ref{lem4} imply the following result concerning the area derivative and the Jacobian at the origin.

\medskip
\begin{theorem}\label{th7}
Let $f\colon \mathbb B\to \mathbb B$ be a regular homeomorphism of Sobolev class $W^{1,1}_{\rm loc}$ possessing Lusin's $(N)$-property. Suppose that for real numbers $p$ and $s,$ $1<p<2<s<\infty,$ there exist and coincide the following limits
\begin{equation*}
\lim\limits_{r\rightarrow0}\,(2-p)^\frac{2}{2-p}\left(r^{p-2}\int\limits_0^r\frac{dt}{t^{p-1}\, d_{p}(t)}\right)^\frac{2}{2-p}\,=\,
\lim\limits_{r\rightarrow0}\,(s-2)^\frac{2}{2-s}\left(r^{s-2}\int\limits_r^1\frac{dt}{t^{s-1}\, d_{s}(t)}\right)^\frac{2}{2-s}\,=\,A.
\end{equation*}
Then, the area derivative $\mu_f$ exists at 0, and
\begin{equation*}
\mu_f(0)=\lim\limits_{r\rightarrow0}\frac{S(r)}{\pi r^2}=A\,.
\end{equation*}
\end{theorem}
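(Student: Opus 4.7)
The plan is to sandwich the ratio $S(r)/(\pi r^{2})$ between two quantities whose limits as $r\to 0$ are assumed to equal $A$, and invoke a squeeze argument to conclude that $\mu_f(0)=\lim_{r\to 0}S(r)/(\pi r^{2})$ exists and equals $A$. The two hypothesized limits are designed precisely to match the lower bound of Lemma~\ref{lem4} (applied with the exponent $p\in(1,2)$) and the upper bound of Lemma~\ref{lem2} (applied with the exponent $s>2$), once both are renormalized by $\pi r^{2}$.

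For the lower bound I would start from Lemma~\ref{lem4}, which reads
\begin{equation*}
S(r)\,\ge\,\pi(2-p)^{2/(2-p)}\left(\int_{0}^{r}\frac{dt}{t^{p-1}\,d_{p}(t)}\right)^{2/(2-p)}.
\end{equation*}
Dividing by $\pi r^{2}$ and using the elementary identity $(r^{p-2})^{2/(2-p)}=r^{-2}$ (since $(p-2)\cdot 2/(2-p)=-2$), the factor $1/r^{2}$ is absorbed into the bracket and reproduces exactly the first expression in the hypothesis. Passing to $\liminf_{r\to 0}$ thus yields $\liminf_{r\to 0}S(r)/(\pi r^{2})\ge A$.

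For the upper bound I would apply Lemma~\ref{lem2} with its exponent taken to be $s>2$, obtaining
\begin{equation*}
S(r)\,\le\,\pi(s-2)^{-2/(s-2)}\left(\int_{r}^{1}\frac{dt}{t^{s-1}\,d_{s}(t)}\right)^{-2/(s-2)}.
\end{equation*}
Rewriting $(s-2)^{-2/(s-2)}=(s-2)^{2/(2-s)}$ and using the analogous identity $(r^{s-2})^{2/(2-s)}=r^{-2}$, division by $\pi r^{2}$ yields precisely the second expression in the hypothesis, and passing to $\limsup_{r\to 0}$ gives $\limsup_{r\to 0}S(r)/(\pi r^{2})\le A$.

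Combining these two inequalities with the assumption that both hypothesized limits equal $A$ forces
\begin{equation*}
A\,\le\,\liminf_{r\to 0}\frac{S(r)}{\pi r^{2}}\,\le\,\limsup_{r\to 0}\frac{S(r)}{\pi r^{2}}\,\le\,A,
\end{equation*}
so that $\lim_{r\to 0}S(r)/(\pi r^{2})=A$, which by definition is the area derivative $\mu_f(0)$. There is no substantial obstacle here; the proof is a direct combination of Lemmas~\ref{lem2} and \ref{lem4}, and the only point demanding any care is the exponent bookkeeping that identifies the normalization by $r^{2}$ with the weights $r^{p-2}$ and $r^{s-2}$ appearing inside the two prescribed limits.
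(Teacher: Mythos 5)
Your proposal is correct and is essentially the paper's own argument: the paper proves this theorem precisely by applying Lemma~\ref{lem4} (lower bound, exponent $p\in(1,2)$) and Lemma~\ref{lem2} (upper bound, exponent $s>2$) to $S(r)/(\pi r^2)$ and letting $r\to 0$. Your exponent bookkeeping, which folds $r^{-2}$ into the brackets via $(r^{p-2})^{2/(2-p)}=r^{-2}$ and $(r^{s-2})^{2/(2-s)}=r^{-2}$, is exactly the step the paper leaves implicit, so nothing further is needed.
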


\begin{proof}
Applying Lemmas~\ref{lem2} and \ref{lem4} to $S(r)/\pi r^2,$ and letting $r\to 0,$ one obtains the assertion of Theorem~\ref{th7}.
\end{proof}

\begin{corollary}
If $z_0=0$ is a point of differentiability of $f,$ then $J_f(0)=A.$
\end{corollary}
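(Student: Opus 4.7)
The plan is to deduce the corollary directly from Theorem~\ref{th7}: that theorem already gives $\lim_{r\to 0} S(r)/(\pi r^2) = A$, so it suffices to show the same ratio must tend to $J_f(0)$ whenever $f$ is differentiable at the origin. The identification $J_f(0) = A$ then follows immediately.

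Write $L = Df(0)$, so that $J_f(0) = \det L \ge 0$ by the sense-preserving assumption, and
\begin{equation*}
f(z) = Lz + e(z), \qquad |e(z)| \le \varepsilon |z| \text{ for } |z| < \delta_\varepsilon.
\end{equation*}
For the upper bound, the inclusion $f(B_r) \subset L(B_r) + \overline{B}_{\varepsilon r}$ for $r < \delta_\varepsilon$ combined with the Steiner/Minkowski formula for the enlargement of a convex set yields
\begin{equation*}
S(r) \le |\det L|\,\pi r^2 + C_L\,\varepsilon\, r^2,
\end{equation*}
since the perimeter of the ellipse $L(B_r)$ is $O(r)$. Letting $r\to 0$ and then $\varepsilon\to 0$ gives $\limsup_{r\to 0} S(r)/(\pi r^2) \le J_f(0)$.

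For the matching lower bound, I split into two cases. If $\det L = 0$, the upper bound already forces $S(r)/(\pi r^2) \to 0 = J_f(0)$. If $\det L > 0$, then $L$ is invertible, so $g = L^{-1}\circ f$ is a homeomorphism near $0$ with $|g(z)-z| \le \|L^{-1}\|\varepsilon |z|$ for $|z| < \delta_\varepsilon$. Since $g$ is injective and $|g(z) - z| < r$ on $\partial B_r$ once $\|L^{-1}\|\varepsilon < 1$, a straightforward Brouwer degree (or winding number) argument shows
\begin{equation*}
g(B_r) \supset B_{r(1 - \|L^{-1}\|\varepsilon)}, \qquad \text{hence}\qquad f(B_r) \supset L\bigl(B_{r(1 - \|L^{-1}\|\varepsilon)}\bigr).
\end{equation*}
Taking areas, $S(r)/(\pi r^2) \ge |\det L|\,(1-\|L^{-1}\|\varepsilon)^2$, and letting $\varepsilon\to 0$ gives $\liminf_{r\to 0} S(r)/(\pi r^2) \ge J_f(0)$.

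Combining the two bounds, $\lim_{r\to 0} S(r)/(\pi r^2) = J_f(0)$, which together with Theorem~\ref{th7} forces $J_f(0) = A$. The only mildly delicate step is the degree/winding-number argument supplying the lower bound in the nondegenerate case; the rest is just set inclusions with Steiner-type area estimates. Nothing beyond the hypotheses is needed.
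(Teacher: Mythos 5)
Your argument is correct and follows the paper's (implicit) route: the corollary is stated there without proof, the intended reasoning being precisely that at a point of differentiability the area derivative $\lim_{r\to 0}S(r)/(\pi r^2)$ coincides with $J_f(0)$, which together with Theorem~\ref{th7} forces $J_f(0)=A$. Your Steiner-formula upper bound and degree-theoretic lower bound constitute a complete and correct verification of that standard fact (with the trivial adjustment $f(z)=f(0)+Lz+e(z)$ if $f(0)=0$ is not assumed, since areas are translation invariant).
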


\section{Nonlinear Beltrami equation}

\subsection{}
Let $D$ be a domain in $\mathbb{C}$ and $\mu\colon D\to \mathbb{C}$ be a measurable function with $|\mu(z)|<1$ a.e. in $D.$
The linear PDE
\begin{equation}\label{eq1lem402}
f_{\overline{z}}=\mu(z)f_{z}.
\end{equation}
is called the Beltrami equation; here $z=x+iy,$
\begin{equation*}
f_{\overline{z}}=\frac{1}{2}(f_{x}+if_{y}),\quad f_{z}=\frac{1}{2}(f_{x}-if_{y}).
\end{equation*}
The function $\mu$ is called the complex dilatation, and
\begin{equation*}
K_{\mu}(z)=\frac{1+|\mu(z)|}{1-|\mu(z)|}
\end{equation*}
is the real dilatation coefficient (or Lavrentiev coefficient) of \eqref{eq1lem402}.

For the case when $K_{\mu}$ is not essentially bounded, i.e. $||K_{\mu}||_{\infty}=\infty,$ \eqref{eq1lem402} is called the degenerate Beltrami equation.

\subsection{} Let $\sigma\colon D \to\mathbb{C}$ be a measurable function, and $m\ge 0.$ We consider the following nonlinear equation
\begin{equation}\label{nbp}
f_{r}=\sigma(re^{i\theta})\,|f_{\theta}|^{m}\, f_{\theta},
\end{equation}
written in the polar coordinates $(r,\theta).$ Here $f_{r}$ and $f_{\theta}$ are the partial derivatives of $f$ in $r$ and $\theta,$ respectively,
satisfying
\begin{equation*}
r f_{r}=zf_{z}+\bar{z}f_{\bar{z}} \, ,\, f_{\theta}=i(zf_{z}-\bar{z}f_{\bar{z}}).
\end{equation*}

The equation \eqref{nbp} in the Cartesian coordinates has the form
\begin{equation}\label{nb1}
f_{\bar{z}}=\,\frac{A(z)|zf_{z}-\bar{z}f_{\bar{z}}|^{m}-1}{ A(z)|zf_{z}-\bar{z}f_{\bar{z}}|^{m}+1}\, \frac{z}{\bar{z}} \, f_{z}\,,
\end{equation}
where $A(z)=\sigma(z)\,|z|\,i,$ or
\begin{equation*}
f_{\bar{z}}=\,\frac{A(z)\left(|z|^2|f_{z}|^2 -2\Re (z^2\,f_{z}\bar{f}_{\bar{z}} ) +|z|^2|f_{\bar{z}}|^2\right)^{m/2}-1}{ A(z)\left(|z|^2|f_{z}|^2 -2\Re (z^2\,f_{z}\bar{f}_{\bar{z}} ) +|z|^2|f_{\bar{z}}|^2\right)^{m/2}+1}\, \frac{z}{\bar{z}} \, f_{z}\,.
\end{equation*}

Note that in the case $m=0,$ the equation \eqref{nbp} is the usial Beltrami equation \eqref{eq1lem402} with the complex dilatation
\begin{equation*}
\mu(z)=\frac{z}{\bar{z}}\,\frac{A(z)-1}{A(z)+1}\,.
\end{equation*}

\subsection{} Now we present an application of the main result to homeomorphic solutions of the nonlinear equation \eqref{nbp}.

\medskip
\begin{theorem}
Let $f\colon \mathbb{B}\to \mathbb{B}$ be a regular homeomorphic solution of the equation \eqref{nbp} which belongs to Sobolev class $W^{1,2}_{\rm loc},$ and normalized by $f(0)=0.$ Assume that the coefficient $\sigma:\mathbb{B}\to \mathbb{C} $ satisfies the following condition
\begin{equation}\label{condnb1}
\liminf\limits_{r\rightarrow 0}\left(\frac{1}{\pi r^2}\iint\limits_{B_{r}} \frac{dxdy}{|z| \left( \Im\, \overline{\sigma}(z)\right)^{\frac{1}{m+1}}}\, \right)^{m+1}\le \sigma_{0}<\infty\,.
\end{equation}
Then
\begin{equation*}
\liminf\limits_{z\rightarrow 0}\frac{|f(z)|}{|z|}\le c_{m}\,\sigma_{0}^{\frac{1}{m}}<\infty\,,
\end{equation*}
where $c_{m}$ is a positive constant depending on the parameter $m.$
\end{theorem}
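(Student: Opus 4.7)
The plan is to reduce the theorem to a direct application of Theorem~\ref{th1}. Inspecting the hypothesis \eqref{condnb1}, the interior exponent $\frac{1}{m+1}$ together with the outer power $m+1$ strongly suggest the choice $p-1=m+1$, that is $p=m+2$. Since the conclusion $c_{m}\,\sigma_{0}^{1/m}$ is meaningful precisely when $m>0$, this gives $p>2$, which is exactly the range required by Theorem~\ref{th1}.

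The first step is to express the angular dilatation $D_{m+2}(z)$ in terms of $\sigma$. Combining the polar-coordinate identity $J_f(re^{i\theta}) = \frac{1}{r}\,\Im(f_\theta\,\overline{f_r})$ with the equation \eqref{nbp} (which gives $\overline{f_r}=\overline{\sigma(z)}\,|f_\theta|^{m}\,\overline{f_\theta}$) one finds
\[
f_\theta\,\overline{f_r} \;=\; \overline{\sigma(z)}\,|f_\theta|^{m+2},
\]
so that $J_f(re^{i\theta}) = \frac{|f_\theta|^{m+2}}{r}\,\Im\,\overline{\sigma}(z)$. Substituting into \eqref{eq2} with $p=m+2$, the factor $|f_\theta|^{m+2}$ cancels and one is left with
\[
D_{m+2}(z) \;=\; \frac{1}{|z|^{m+1}\,\Im\,\overline{\sigma}(z)}.
\]
Raising to the power $\tfrac{1}{p-1}=\tfrac{1}{m+1}$ yields precisely the integrand appearing in \eqref{condnb1}.

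With this identification, the hypothesis \eqref{condnb1} becomes word-for-word the hypothesis of Theorem~\ref{th1} with $p=m+2$ and $k=\sigma_{0}$. Invoking that theorem gives
\[
\liminf_{z\to 0}\frac{|f(z)|}{|z|} \;\le\; c_{m+2}\,\sigma_{0}^{1/(p-2)} \;=\; c_{m+2}\,\sigma_{0}^{1/m},
\]
and setting $c_{m} := c_{m+2}$ completes the proof.

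I expect the main technical point to be not the computation above but the verification that the structural hypotheses of Theorem~\ref{th1} hold for our $f$. The $(N)$-property and regularity (in particular $J_f\neq 0$ a.e., which via the Jacobian formula forces $\Im\,\overline{\sigma}(z)>0$ on a full-measure set, consistent with the finiteness demanded by \eqref{condnb1}) are built into the assumptions; the inclusion $W^{1,2}_{\rm loc}\subset W^{1,1}_{\rm loc}$ is automatic. Once these routine compatibility checks are in place, the reduction described above is immediate.
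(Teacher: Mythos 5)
Your proposal is correct and follows essentially the same route as the paper: the choice $p=m+2$, the computation of $D_{m+2}(z)=\bigl(|z|^{m+1}\,\Im\,\overline{\sigma}(z)\bigr)^{-1}$ from the polar Jacobian formula and equation \eqref{nbp}, and the direct application of Theorem~\ref{th1} with $k=\sigma_0$ are exactly the paper's argument. Your added remarks on $\Im\,\overline{\sigma}>0$ a.e.\ and the inclusion $W^{1,2}_{\rm loc}\subset W^{1,1}_{\rm loc}$ are sound routine checks that the paper leaves implicit.
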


\begin{proof}
In the polar coordinates, the Jacobian of a sense-preserving mapping $f$ has the form $J_f(re^{i\theta})=\frac{1}{r}\Im(\bar f_r\cdot f_\theta),$ then the equalities \eqref{eq2} and \eqref{nbp} yield,
\begin{equation*}
D_p(z)=\frac{|f_\theta|^p}{r^{p-1}|f_\theta|^{m+2}\Im(\bar\sigma(z))}.
\end{equation*}
Letting $p=m+2,$ one rewrites the condition (\ref{eqth1}) as
\begin{equation*}
\liminf\limits_{r\rightarrow 0}\left(\frac{1}{\pi r^2}\iint\limits_{B_r}\frac{dxdy}{|z|\left(\Im(\bar\sigma(z))\right)^\frac{1}{m+1}}\right)^{m+1}\,\le\,\sigma_0\,.
\end{equation*}
Then, by Theorem~\ref{th1},
\begin{equation*}
\liminf\limits_{z\rightarrow 0}\frac{|f(z)|}{|z|}\,\le\,\tilde c_m\,\sigma_0^\frac{1}{m}\,,
\end{equation*}
which completes the proof. Here $\tilde c_m=c_{m+2}$ (from Theorem~\ref{th1}) depends only on $m.$
\end{proof}

\begin{example}
Assume that $m>0$ and $\kappa>0.$ Consider the equation
\begin{equation}
\label{exb1} f_{r}=\frac{-i}{\kappa \, r^{m+1}}|f_{\theta}|^{m} f_{\theta},
\end{equation}
in the unit disc.
\end{example}

Now $\sigma=-i \kappa^{-1} r^{-m-1}$ and satisfies the condition (\ref{condnb1}) with $\sigma_0=\kappa.$
It is easy to check that $f=\kappa^{\frac{1}{m}} r e^{i\theta}$ is a solution of the equation (\ref{exb1}), for which
\begin{equation*}
\lim\limits_{z\to 0}\frac{|f(z)|}{|z|}=\kappa^{\frac{1}{m}}<\infty.
\end{equation*}

\bigskip
\bigskip
{\textbf{Acknowledgment.}}\ \
The first author was supported by EU FP7 IRSES program STREV\-COMS, grant no. PIRSES-2013-612669.

\medskip
{\small \leftline{\textbf{Anatoly Golberg}} \em{
\leftline{Department of Mathematics,} \leftline{Holon
Institute of Technology,} \leftline{52 Golomb St., P.O.B. 305,}
\leftline{Holon 5810201, ISRAEL} \leftline{Fax: +972-3-5026615}
\leftline{e-mail: golberga@hit.ac.il}}}

\medskip

{\small \leftline{\textbf{Ruslan Salimov}}\em{ \leftline{Institute of Mathematics,}
\leftline {National Academy of Sciences of Ukraine,}
\leftline{3 Tereschenkivska St.,}
\leftline{Kiev-4 01601, UKRAINE}\leftline{e-mail:
ruslan.salimov1@gmail.com}}}

\medskip

{\small \leftline{\textbf{Maria Stefanchuk}}\em{ \leftline{Institute of Mathematics,}
\leftline {National Academy of Sciences of Ukraine,}
\leftline{3 Tereschenkivska St.,}
\leftline{Kiev-4 01601, UKRAINE}\leftline{e-mail:
stefanmv43@gmail.com}}}

\end{document}